\documentclass[12pt]{amsart}
\usepackage{amssymb,tikz}
\overfullrule=10pt
\parskip=0pt
\raggedbottom
\allowdisplaybreaks

\def\id{\mathop{\rm id}\nolimits}
\def\XX#1{#1}

\def\sat{{\rm sat}}
\def\Lsat{L^\sat}
\newcommand{\set}[2]{\{#1:#2\}}
\def\freeproduct{*}
\def\U{\mathfrak{U}}
\def\sset{\subseteq}
\def\D{\Delta}
\let\iso\cong
\let\cong\equiv

\def\I{{\rm I}}
\def\II{{\rm II}}
\def\kronecker#1#2{(\frac{#1}{#2})}
\def\jacobi#1#2{(\frac{#1}{#2})}
\def\Bigjacobi#1#2{\Bigl(\frac{#1}{#2}\Bigr)}
\def\Orth{{\rm O}}
\def\GL{{\rm GL}}

\def\Z{\mathbb{Z}}
\def\Q{\mathbb{Q}}
\def\N{\mathbb{N}}
\def\F{\mathbb{F}}
\def\R{\mathbb{R}}
\def\tensor{\otimes}
\def\signature{\mathop{\rm signature}\nolimits}
\def\oddity{\mathop{\rm oddity}\nolimits}
\def\std{\mathop{\rm std}\nolimits}
\def\pexcess{\mathop{\hbox{$\pspace$-excess}}\nolimits}
\def\pexcesses{\mathop{\hbox{$\pspace$-excesses}}\nolimits}
\def\pspace{p\kern.1em}
\def\Lneg{L^{\rm neg}}
\def\G{\mathfrak{G}}

\newtheorem{theorem}{Theorem}
\newtheorem{lemma}[theorem]{Lemma}

\theoremstyle{remark}

\begin{document}

\title{Prenilpotent pairs in the $E_{10}$ root lattice}
\author{Daniel Allcock}
\thanks{Supported by NSF grant DMS-1101566}
\address{Department of Mathematics\\University of Texas, Austin}
\email{allcock@math.utexas.edu}
\urladdr{http://www.math.utexas.edu/\textasciitilde allcock}
\subjclass[2000]{%
Primary: 19C99% Steinberg groups and K2, "none of the above but in this section"
; Secondary: 20G44%     Kac-Moody groups
}
\date{February 7, 2017}
%\date{January 21, 2017}
%\date{August 24, 2016}
%\date{September 23, 2014}
%\date{August 15, 2014}

\begin{abstract}
  Tits has defined Kac--Moody groups for all root systems, over all
  commutative rings with unit.  A central concept is the idea of a prenilpotent
  pair of (real) roots.  In particular, writing down his group presentation
explicitly   would require knowing all the Weyl-group orbits of such
pairs.  We show that for the hyperbolic root system $E_{10}$ there are
so many orbits that any attempt at direct enumeration is impractical.
Namely, the number of orbits of prenilpotent pairs having inner
product $k$ grows at least as fast as $(\hbox{constant})\cdot k^7$ as
$k\to\infty$.  Our purpose is to motivate alternate approaches to
Tits' groups, such as the one in \cite{Allcock-amalgams}.
\end{abstract}
\maketitle

\noindent
Kac--Moody groups generalize reductive algebraic groups to include the
infinite dimensional case.  Various authors have defined them in many
ways, the most comprehensive approach being due to Tits \cite{Tits}.
Given a generalized Cartan matrix $A$, he defined a functor
$\tilde{\G}_{\!A}$ assigning a group to each commutative ring~$R$ with
unit.  The main result of \cite{Tits} is that any functor from
commutative rings to groups, having some properties that are
reasonable to expect of anything called a Kac--Moody group, must agree
with $\tilde{\G}_{\!A}$ over every field.  (See \cite[Theorems 1
  and~1$'$]{Tits}.)
%To be able to refer to it, we call
%$\tilde{\G}_{\!A}$ the ``Kac--Moody group associated to $A$'', even
%though it does not satisfy all of his axioms.  

(Actually Tits defined a group functor $\tilde{\G}_D$ for a root
datum~$D$.  For~$\tilde{\G}_{\!A}$ we use the root datum with
generalized Cartan matrix~$A$, which is ``simply connected in the
strong sense'' \cite[p.~551]{Tits}.  The difference between a root datum
and its generalized Cartan matrix plays no role in this paper.)

Tits defined $\tilde{\G}_{\!A}(R)$ by a complicated
implicitly described presentation.  The
key relations are his generalizations of the Chevalley relations.
He begins with the free product
$\freeproduct_{\alpha}(\U_\alpha)$, where $\alpha$ varies over
all real roots and each $\U_\alpha$ is isomorphic to the additive
group of~$R$.
This step requires knowing all the real roots, which is nontrivial but
reasonably accessible (lemma~\ref{lem-what-the-roots-are}).
He imposes relations of the form
$$
[X_\alpha(t),X_\beta(u)]=
\prod_{\gamma}X_\gamma(v_\gamma)
$$ whenever $\alpha,\beta\in\Phi$ form a \emph{prenilpotent pair} (see
below).
Here $\U_\alpha=\set{X_{\alpha}(t)}{t\in R}$ and similarly for the
other roots, the $\gamma$'s parameterizing the product are the real
roots in $\N\alpha+\N\beta$ other than $\alpha$ and $\beta$, and the
parameters $v_\gamma$ depend on various choices like the ordering of
the factors (and anyway are unimportant
in this paper).

The definition of prenilpotency is that some element of the Weyl
group~$W$ sends both $\alpha,\beta$ to positive roots, and some other
element sends both to negative roots.  When this holds, Prop.~1 of
\cite{Tits} and its proof show how to work out the Chevalley relation
of $\alpha,\beta$, at least in principle.  It is similar to, but more complicated
than, the working out of the structure constants of the Kac--Moody
algebra.  (In fact H\'ee \cite{Hee} and Morita \cite{Morita} have worked out all the possible types of the
relations in closed form.)  So the
essence of writing down Tits' presentation is to list all the
prenilpotent pairs.  It would even be enough to find one
representative of each $W\!$-orbit of prenilpotent pairs.  Our main
result,
theorem~\ref{thm-number-of-pairs-at-least-polynomial-of-degree-7}
below, is that this is impossible in practice for the $E_{10}$ root
system, whose Dynkin diagram is
\begin{equation}
  \label{eq-E10-diagram}
  \def\myscale{.6}
\def\mywidth{.8pt}
\def\vertexradius{.1}
\def\vertex(#1){\fill (#1) circle (\vertexradius)}
\begin{tikzpicture}[line width=\mywidth, scale=\myscale]
\draw (0,0) -- (8,0);
\draw (2,0) -- (2,1);
\vertex (0,0);
\vertex (1,0);
\vertex (2,0);
\vertex (3,0);
\vertex (4,0);
\vertex (5,0);
\vertex (6,0);
\vertex (7,0);
\vertex (8,0);
\vertex (2,1);
\end{tikzpicture}%
\end{equation}
The argument suggests
that the same holds for all hyperbolic root systems of rank${}>3$;
see section~\ref{sec-other-lattices}.

This negative result is balanced by the fact that in many interesting
cases, including $E_{10}$, most of the prenilpotent pairs can be
ignored because their Chevalley relations follow from those of other
prenilpotent pairs.  There are two approaches to this.  The first is
due to Abramenko and M\"uhlherr
\cite{Abramenko-Muhlherr}\cite{Caprace}\cite{Muhlherr}, and applies to
Kac--Moody groups associated to $2$-spherical Dynkin diagrams, over
fields, with some exceptions over $\F_2$ and~$\F_3$.  The second
approach is due to the author
\cite{Allcock-amalgams}\cite{Allcock-affine}; see also \cite{Allcock-Carbone}.
It works over general rings, but requires some conditions on the
diagram.  Both approaches apply to all irreducible affine diagrams (of rank${}\geq3$)
and all simply laced diagrams without $A_1$ components, such as $E_{10}$.  In both
cases the result is that $\tilde{\G}_{\!A}(R)$ is the direct limit of
the family of groups $\tilde{\G}_B(R)$, where $B$ varies over the $1$-
and $2$-node subdiagrams of~$A$.  So
one may discard almost all of Tits'
Chevalley-style relations, without changing the resulting group.  In the author's approach, one
even obtains an explicit presentation (often finite) given in terms of
the Dynkin diagram, for example $\tilde{\G}_{E_{10}}(R)$ and
$\tilde{\G}_{E_{10}}(\Z)$ in theorem~\XX{1} and corollary~\XX{2} of
\cite{Allcock-Carbone}.

\medskip
Now we begin the $E_{10}$-specific material.
We write $\Lambda$ for the root lattice, i.e., the integer span of the
simple roots.
The generalized
Cartan matrix $A$ for $E_{10}$ is got from \eqref{eq-E10-diagram} in
the usual way: $A_{ii}=2$ for each node $i$ of the diagram, and
$A_{ij}=-1$ or~$0$ according to
whether distinct nodes $i$ and~$j$ are joined or not.  This matrix is symmetric, so it may
be regarded as an inner product matrix on $\Lambda$.  For
$x\in\Lambda$, the norm of $x$ means $x\cdot x$, usually written
$x^2$.
The Weyl group $W$ acts on $\Lambda$ by isometries.  We will never
refer to imaginary roots, so we follow Tits \cite{Tits} in using ``root''
to mean ``real root'', i.e., ``$W\!$-image of a simple root''.  Now we
can state our main result:

\begin{theorem}
\label{thm-number-of-pairs-at-least-polynomial-of-degree-7}
Let $N(k)$ be the number of $W\!$-orbits of prenilpotent pairs
of roots in the $E_{10}$ root system with inner product~$k$.  Then for some positive
constant $C$, we have $N(k)\geq C k^7$ for all integers~$k$.
\end{theorem}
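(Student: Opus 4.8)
The plan is to produce, for each large $k$, at least $Ck^{7}$ pairwise $W\!$-inequivalent pairs, by translating the problem into one about positive-definite lattices of rank $8$ and then invoking the mass formula. Throughout I regard $\L$ as the even unimodular lattice of signature $(9,1)$, whose norm-$2$ vectors are precisely the roots, and I write $\Orth(\L)$ for its isometry group. Since $W$ is a \emph{finite-index} subgroup of $\Orth(\L)$ (the reflective lattice $\L$ has a finite-volume Weyl chamber with finite symmetry group), each $\Orth(\L)$-orbit splits into finitely many $W\!$-orbits; hence $N(k)$ is at least the number of $\Orth(\L)$-orbits of prenilpotent pairs with inner product~$k$, and it suffices to bound the latter from below.

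First I would dispose of prenilpotency. Examining the rank-$2$ subsystem generated by $s_\alpha,s_\beta$, one checks that when $\langle\alpha,\beta\rangle=k>2$ a simple system for the (infinite dihedral) group $\langle s_\alpha,s_\beta\rangle$ is $\{\alpha-k\beta,\beta\}$, relative to which $\alpha$ and $\beta$ are both positive, while the element $s_{\alpha-k\beta}\,s_\beta$ sends both to negative roots. Thus every pair with $\langle\alpha,\beta\rangle=k$ is prenilpotent as soon as $k>2$, and for such $k$ the quantity $N(k)$ is (up to the finite index above) the number of $\Orth(\L)$-orbits of \emph{all} root pairs with inner product~$k$. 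The one point needing care here is the passage from the dihedral subgroup to the full group $W$, but the witnessing elements already lie in $W$ and the required positivity is inherited.

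Next comes the lattice-theoretic heart. Restrict attention to \emph{primitive} pairs, those for which $M:=\Z\alpha+\Z\beta$ is a primitive sublattice; this only lowers the count. The Gram matrix of $(\alpha,\beta)$ is $\begin{pmatrix}2&k\\k&2\end{pmatrix}$, so $M$ has signature $(1,1)$ and discriminant $4-k^2$, and its orthogonal complement $L:=M^\perp$ is a \emph{positive-definite even lattice of rank $8$} with $|\det L|=k^2-4$ and discriminant form $q_L\iso -q_M$. The isometry class of $L$ is an $\Orth(\L)$-invariant of the pair, giving a map from orbits to isometry classes of such $L$. By Nikulin's theory of primitive embeddings into unimodular lattices, together with the uniqueness of the even unimodular lattice of signature $(9,1)$ (which is $\L$ itself), every even positive-definite $L$ of rank $8$ with $q_L\iso -q_M$ arises as a complement; that is, the map is \emph{surjective}. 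Hence $N(k)$ is at least the number $h(\G_k)$ of isometry classes in the single genus $\G_k$ of such lattices (the genus being pinned down by signature and discriminant form).

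Finally I would bound $h(\G_k)$ from below by the Smith--Minkowski--Siegel mass formula. Since $\{\pm1\}\sset\Orth(L)$ for every positive-definite $L$, we have $h(\G_k)\geq 2\,\mathrm{mass}(\G_k)$. The archimedean factor of the mass contributes $|\det L|^{(\mathrm{rk}\,L-1)/2}=(k^2-4)^{7/2}\asymp k^{7}$; the exponent is exactly $\mathrm{rk}\,L-1=7$, which is also why the argument should degrade to exponent $r-3$ for a rank-$r$ hyperbolic lattice, positive precisely when $r>3$. The genuine obstacle is the non-archimedean part: the $p$-adic densities at primes $p\mid 2(k^2-4)$ could conceivably conspire to erode the estimate by a factor $k^{o(1)}$. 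I would remove this by restricting to the infinite, positive-proportion set of $k$ for which $k^2-4$ is squarefree; there the local densities are uniformly bounded below, so $\mathrm{mass}(\G_k)\gg k^{7}$ with an absolute constant, and the asserted growth rate follows.
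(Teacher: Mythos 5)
Your overall strategy is the same as the paper's: reduce to counting $\Orth(E_{10})$-orbits of saturated (primitive) embeddings of the rank-$2$ lattice with Gram matrix $\bigl(\begin{smallmatrix}2&k\\k&2\end{smallmatrix}\bigr)$, pass to the rank-$8$ positive-definite orthogonal complement $K$, and bound the count below by the mass of its genus. The gap is in your final quantitative step. In the Conway--Sloane normalization, each odd prime $p\mid d$ (where $d=k^2-4$) contributes a local factor $m_p(K)/\std_p(K)=\tfrac12 p^{7e_p/2}$: the $p^{7e_p/2}$'s assemble into the $d^{7/2}$ you want, but the $\tfrac12$'s assemble into $2^{-\omega}$, where $\omega$ is the number of odd primes dividing $d$. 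This factor arises from the mere presence of a nontrivial Jordan constituent at each such $p$, independently of the exponent $e_p$, so restricting to squarefree $k^2-4$ does \emph{not} remove it; your claim that the local densities are then ``uniformly bounded below'' is false. Since $2^{\omega}$ is unbounded (though $d^{o(1)}$), the mass alone yields only $N(k)\gg_\epsilon k^{7-\epsilon}$, not $N(k)\geq Ck^7$. Separately, even if the squarefree restriction did what you want, it would establish the bound only on a positive-proportion set of $k$, which is weaker than the theorem, whose proof must (and in the paper does) cover every $k$.

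The paper closes exactly this gap by counting gluings rather than merely isometry classes of complements. For a fixed $K$ in the genus, the saturated embeddings $L\to E_{10}$ with complement $K$ correspond to isotropic ``graph'' subgroups $G$ of $\D(L)\oplus\D(K)$ projecting isomorphically to each summand, and there are at least $|\Orth(\D(L))|\geq 2^{\omega}$ of them (negate the $p$-part of $\D(L)$ for each odd $p\mid d$). Two such $G$ give $\Orth(E_{10})$-equivalent embeddings only if they are related by $\Orth(L)\times\Orth(K)$, and $\Orth(L)$ acts on $\D(L)$ through a group of order at most $4$; hence each $K$ accounts for at least $2^{\omega}/(4\,|\Orth(K)|)$ orbits. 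Summing over the genus multiplies the mass by $2^{\omega}/4$, exactly cancelling the $2^{-\omega}$ above and restoring the clean $d^{7/2}$ growth for all $k\geq3$ (with $|\Orth(K)|\leq|W(E_8)|$ supplying a uniform constant, and $N(1),N(2)>0$ checked directly). If you replace your ``surjectivity onto the genus'' count by this gluing count, your argument coincides with the paper's and is complete.
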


The constant is made effective in the proof, although we make no
attempt to optimize it.  The theorem says nothing if $k\leq0$, but
this case is uninteresting because there are no prenilpotent pairs
with $k<-1$, by lemma~\ref{lem-what-the-prenilpotent-pairs-are} below.
The proof shows that the problem of enumerating the prenilpotent pairs
contains an infinite sequence of successively more difficult and less
interesting classification problems in the theory of positive-definite
quadratic forms.  For example, the simplest such problem is the classification of positive-definite 
lattices
of dimension~$8$ and determinant $k^2-4$, which becomes difficult and boring for
quite small~$k$.  Hence our description of the direct enumeration of
prenilpotent pairs as ``impractical''.

\section{Proof}
\label{sec-proof}

\noindent
We will prove
theorem~\ref{thm-number-of-pairs-at-least-polynomial-of-degree-7} by
converting it into a lattice-theoretic problem. 
First we need to describe the roots and prenilpotent pairs entirely in
terms of the root lattice $\Lambda$.

\begin{lemma}
\label{lem-what-the-roots-are}
The roots of the $E_{10}$ root system are exactly the norm~$2$ vectors
of $\Lambda$.
\end{lemma}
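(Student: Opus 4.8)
The plan is to prove the two inclusions separately. One direction is immediate: by definition a root is a $W$-image of a simple root, $W$ acts on $\Lambda$ by isometries, and each simple root has norm~$2$ because the diagonal entries of the $E_{10}$ Cartan matrix are~$2$. Hence every root has norm~$2$. The content of the lemma is the converse, that \emph{every} norm~$2$ vector of $\Lambda$ is a root, and this is where the work lies.

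For the converse, the first observation is that $\Lambda$ is integral (the Cartan matrix has integer entries), so for any norm~$2$ vector $r$ the reflection $s_r\colon x\mapsto x-(x,r)\,r$ carries $\Lambda$ into itself, since $(x,r)\in\Z$ for all $x\in\Lambda$; thus $s_r\in\Orth(\Lambda)$. Let $W(\Lambda)$ denote the subgroup of $\Orth(\Lambda)$ generated by \emph{all} reflections $s_r$ in norm~$2$ vectors. Then $W\sset W(\Lambda)$ because each simple reflection is of this form, and the crux of the proof is the reverse containment. Here it is worth recalling the global structure: $\Lambda$ is the even unimodular Lorentzian lattice of rank~$10$, isomorphic to $U\oplus E_8$ and of signature $(9,1)$, so $W(\Lambda)$ acts as a discrete reflection group on a component of the positive cone $\set{x}{x^2<0}$.

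The key input is \emph{reflectivity}, due to Vinberg. The $E_{10}$ chamber $C=\set{x}{(x,\alpha_i)\geq0\text{ for all }i}$, where $\alpha_1,\dots,\alpha_{10}$ are the simple roots, is a fundamental domain for $W(\Lambda)$ on the positive cone, and its facets are exactly the hyperplanes $\alpha_i^\perp$. Concretely one runs Vinberg's algorithm from a suitable controlling time-like vector, adjoining reflection roots in order of increasing height, and checks that the ten simple roots already close up a finite-covolume Coxeter polytope whose Gram matrix is the $E_{10}$ Cartan matrix, so that no further norm~$2$ hyperplane meets the interior of $C$. Granting this, $W=W(\Lambda)$, since a reflection group is generated by the reflections in the facets of any fundamental domain.

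Finally I would deduce the lemma by a chamber argument. The reflecting hyperplanes $\set{r^\perp}{r^2=2}$ tessellate the positive cone into chambers, permuted simply transitively by $W=W(\Lambda)$. Given a norm~$2$ vector $r$, the hyperplane $r^\perp$ is a reflecting hyperplane, hence supports a wall of some chamber, which by transitivity we may take to be $wC$ for some $w\in W$. Applying $w^{-1}$ carries that wall to a facet of $C$, necessarily $\alpha_i^\perp$ for some~$i$. Then $w^{-1}(r)$ and $\alpha_i$ are proportional vectors of norm~$2$, so $w^{-1}(r)=\pm\alpha_i$ and $r=\pm w(\alpha_i)$; in the minus case $r=w(-\alpha_i)=(w\,s_{\alpha_i})(\alpha_i)$ is again a $W$-image of $\alpha_i$. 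In either case $r$ is a root. I expect the reflectivity statement---that the $E_{10}$ chamber is \emph{all} of a fundamental domain, with no extra norm~$2$ hyperplanes slicing it---to be the only genuine obstacle; the two inclusions and the chamber argument are formal once it is in hand.
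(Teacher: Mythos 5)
Your proposal is correct and takes essentially the same route as the paper: both directions hinge on the integrality of $\Lambda$ (so that norm~$2$ reflections preserve it) together with Vinberg's theorem that the simple $E_{10}$ roots already generate the full reflection/isometry group of the positive cone. The only cosmetic difference is that where the paper invokes the standard fact that every reflection in a Coxeter group is conjugate to a simple reflection, you unwind that fact explicitly via the chamber/wall argument.
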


\begin{proof}
The simple roots have norm~$2$ because the generalized Cartan matrix
has $2$'s along its diagonal.  The other roots are their $W\!$-images
and therefore have norm~$2$ also.  Now suppose a lattice vector $r$
has norm~$2$; we must show it is a root.  The reflection in $r$,
namely $R:x\mapsto x-(x\cdot r)r$, preserves $\Lambda$ because $x\cdot
r\in\Z$ for all lattice vectors~$x$.  Also, $\Lambda$ has signature
$(9,1)$, so the negative-norm vectors in $\Lambda\tensor\R$ fall into
two components.  Since $r^2>0$, $R$ preserves each component.  Vinberg
\cite{Vinberg} showed that $W$ is the full group of lattice isometries
that preserve each component, so $R\in W$.  Since every reflection in
$W$ is conjugate to a simple reflection, $r$ is $W\!$-equivalent to a
simple root.  So $r$ is a root.
\end{proof}

\begin{lemma}[{\cite[Lemma~\XX{6}]{Allcock-Carbone}}]
\label{lem-what-the-prenilpotent-pairs-are}
Two roots in the $E_{10}$ root system form a prenilpotent pair
if and only if their inner product is${}\geq-1$.  \qed
\end{lemma}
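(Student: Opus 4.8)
The plan is to convert prenilpotency into an elementary question about the sign patterns of two linear functionals on a Lorentzian space, and then to settle that question by a computation in the plane $P=\langle\alpha,\beta\rangle$ spanned by the two roots. First I would reformulate Tits' definition geometrically. By lemma~\ref{lem-what-the-roots-are} the roots are the norm-$2$ vectors of $\Lambda$, and by Vinberg \cite{Vinberg} $W$ is the full group of isometries preserving each component of the negative cone of $\Lambda\tensor\R$, its chambers tiling one component $V^+$. Fixing the convention that a root $\gamma$ is positive when the fundamental chamber lies in $\{x:\gamma\cdot x<0\}$, one checks that ``$w\alpha,w\beta>0$'' says the chamber $w^{-1}C$ lies in $\{x\in V^+:\alpha\cdot x<0,\ \beta\cdot x<0\}$, while ``$w'\alpha,w'\beta<0$'' says $w'^{-1}C$ lies in $\{x\in V^+:\alpha\cdot x>0,\ \beta\cdot x>0\}$. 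Since the chambers tile $V^+$ and none crosses the walls $\alpha^\perp$ or $\beta^\perp$, each of these open regions, if nonempty, contains a whole chamber. Hence $\{\alpha,\beta\}$ is prenilpotent if and only if both ``diagonal'' regions, where $(\alpha\cdot x,\beta\cdot x)$ lies in $(-\infty,0)^2$ and in $(0,\infty)^2$, are nonempty over $V^+$.

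The signs of $\alpha\cdot x$ and $\beta\cdot x$ depend only on the projection $x_P$ of $x$ to $P=\langle\alpha,\beta\rangle$, so the next step is to record which $x_P$ occur as $x$ ranges over $V^+$, organized by the signature of $P$. The Gram matrix $\bigl(\begin{smallmatrix}2&k\\ k&2\end{smallmatrix}\bigr)$ has determinant $4-k^2$, so $P$ is positive definite exactly when $|k|\le1$, Lorentzian when $|k|\ge3$, and degenerate when $k=\pm2$. If $|k|\le1$ then $P^\perp$ carries the negative direction, $x_P$ ranges over all of $P$, and the two independent functionals $\alpha\cdot$ and $\beta\cdot$ cut $P$ into four nonempty sign sectors; both diagonal regions occur, so the pair is prenilpotent. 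This disposes of $k\in\{-1,0,1\}$.

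The heart of the matter, and the step I expect to be most delicate, is the indefinite case $|k|\ge3$, where $P$ is Lorentzian and $P^\perp$ is positive definite; then $x_P$ must itself be timelike and, matching orientations, ranges over a single timelike cone of $P$. Writing $x_P=a\alpha+b\beta$, this cone is bounded by the null rays $a^2+kab+b^2=0$, while the walls are $2a+kb=0$ and $ka+2b=0$. The crucial point is that the \emph{sign} of $k$, not merely $|k|$, decides the outcome: the null rays lie in the quadrant $ab<0$ when $k>0$ and in the quadrant $ab>0$ when $k<0$, so the timelike cone meets both diagonal sectors exactly when $k\ge3$ and only one of them when $k\le-3$. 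I would verify this by sorting the four relevant rays according to their slopes $b/a$ and reading off the sign pattern on each intervening arc; the conclusion is that the pair is prenilpotent for $k\ge3$ and not prenilpotent for $k\le-3$.

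Finally I would treat the boundary $k=\pm2$, where $P$ has a one-dimensional radical spanned by a null vector $n$, namely $n=\alpha-\beta$ if $k=2$ and $n=\alpha+\beta$ if $k=-2$. Because a null vector pairs with constant sign against every vector of one timelike cone, $n\cdot x$ has constant sign on $V^+$. For $k=-2$ this quantity is $\alpha\cdot x+\beta\cdot x$, whose constant sign forbids one of the two diagonal regions, so the pair is not prenilpotent; for $k=2$ it is $\alpha\cdot x-\beta\cdot x$, which forbids only an off-diagonal pattern, and a short check exhibits points of $V^+$ in each diagonal region, so the pair is prenilpotent. Assembling the cases, $\{\alpha,\beta\}$ is prenilpotent precisely for $k\in\{-1,0,1,2\}$ and $k\ge3$, that is, for $k\ge-1$, as claimed.
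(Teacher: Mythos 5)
Your argument is correct, but note that the paper itself does not prove this lemma: the statement is quoted from \cite[Lemma~6]{Allcock-Carbone} with a reference in lieu of a proof. So what you have written is a genuine self-contained proof rather than a variant of an argument in the text. Your reduction is the natural one: using lemma~\ref{lem-what-the-roots-are} and Vinberg's theorem, prenilpotency of $\{\alpha,\beta\}$ is equivalent to both open regions $\{x\in V^+:\alpha\cdot x<0,\ \beta\cdot x<0\}$ and $\{x\in V^+:\alpha\cdot x>0,\ \beta\cdot x>0\}$ being nonempty, since neither region crosses the mirrors $\alpha^\perp,\beta^\perp$ and hence each, if nonempty, contains a whole chamber. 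Your case division by the signature of $P=\langle\alpha,\beta\rangle$ covers all integer values of $k$, and the definite ($|k|\le1$), degenerate ($k=\pm2$) and Lorentzian ($|k|\ge3$) cases are each handled correctly; the conclusion matches the generalization sketched in section~\ref{sec-other-lattices}, namely prenilpotency iff $k>-\sqrt{NN'}=-2$.

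One step deserves to be written out more carefully than ``sorting the four relevant rays.'' In the Lorentzian case the admissible projections $x_P$ fill only \emph{one} component $\mathcal{C}$ of the timelike cone of $P$ (indeed $x\cdot x_P=x_P^2<0$ places $x_P$ in the same component as $x$), and negation of $(a,b)$ interchanges the two diagonal sign patterns but \emph{also} interchanges the two components of the cone. So one cannot obtain the second diagonal pattern from the first by negating; both patterns must be exhibited inside the single component $\mathcal{C}$. Concretely, for $k\ge3$ take the component with $a>0>b$ and write $b=-ta$ with $t>0$: the timelike condition is $t\in(t_-,t_+)$ with $t_\pm=\frac12\bigl(k\pm\sqrt{k^2-4}\bigr)$, and one checks $t_-<2/k<k/2<t_+$, with sign pattern $(+,+)$ for $t\in(t_-,2/k)$ and $(-,-)$ for $t\in(k/2,t_+)$, so both diagonal patterns occur in $\mathcal{C}$. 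For $k\le-3$ the analogous computation on the component with $a,b>0$ produces only the interval $(2/|k|,|k|/2)$, hence only one diagonal pattern, as you assert. With that detail supplied, and with the observation (which you make) that for $k=2$ it suffices to find points of $V^+$ on the negative side of $\alpha^\perp$ and on the positive side of $\beta^\perp$, the proof is complete.
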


At this point the proof of
theorem~\ref{thm-number-of-pairs-at-least-polynomial-of-degree-7}
becomes entirely lattice-theo\-re\-tic, relying on the theory of
integer quadratic forms to study certain sublattices of $\Lambda$.
We
fix $k\geq-1$ and consider prenilpotent pairs with inner product~$k$.
We write $L$ for the integer span of such a prenilpotent pair; its
inner product matrix is
$\bigl(\begin{smallmatrix}2&k\\k&2\end{smallmatrix}\bigr)$.
We will write $\Orth(L)$ and $\Orth(\Lambda)$ for the orthogonal
groups of $L$ and $\Lambda$, and similarly for other lattices.
  The next
  lemma follows immediately from the previous two.

\begin{lemma}
\label{lem-conversion-to-quadratic-forms-problem}
$N(k)$ equals the number of orbits of isometric embeddings $L\to
\Lambda$, under the group $(\Z/2)\times W$, where $\Z/2$ acts on $L$
by swapping its basis vectors and $W$ acts on $\Lambda$ in the obvious
way.  In particular, $N(k)$ is at least as large as the number of
orbits of sublattices of $\Lambda$ that are isometric to $L$,
under the orthogonal group $\Orth(\Lambda)$.  \qed
\end{lemma}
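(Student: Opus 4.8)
The plan is to prove Lemma~\ref{lem-conversion-to-quadratic-forms-problem}, which asserts that $N(k)$ equals the number of orbits of isometric embeddings $L\to E_{10}$ under $(\Z/2)\times W$, and that this is at least the number of $\Orth(E_{10})$-orbits of isometric copies of $L$. The strategy is to set up a bijection between prenilpotent pairs with inner product $k$ and isometric embeddings $L\to E_{10}$, then track how the relevant group actions correspond under this bijection.

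First I would make precise the correspondence between pairs and embeddings. Given an ordered pair $(\alpha,\beta)$ of roots with $\alpha\cdot\beta=k$, Lemma~\ref{lem-what-the-roots-are} tells us $\alpha,\beta$ are norm~$2$ vectors of $E_{10}$, so sending the two basis vectors of $L$ to $\alpha$ and $\beta$ respectively defines a linear map $L\to E_{10}$. This map is an isometric embedding precisely because the inner product matrix of $L$ is $\bigl(\begin{smallmatrix}2&k\\k&2\end{smallmatrix}\bigr)$, matching the Gram matrix of $(\alpha,\beta)$. Conversely, an isometric embedding $L\to E_{10}$ sends the two basis vectors to norm~$2$ vectors with inner product $k$, which are roots by Lemma~\ref{lem-what-the-roots-are} and form a prenilpotent pair by Lemma~\ref{lem-what-the-prenilpotent-pairs-are} since $k\geq-1$. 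So ordered prenilpotent pairs with inner product $k$ correspond bijectively to isometric embeddings $L\to E_{10}$.

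Next I would identify the group actions. The unordered pairs counted by $N(k)$ are obtained from ordered pairs by forgetting the ordering, which is exactly the action of $\Z/2$ swapping the two basis vectors of $L$; and $N(k)$ counts $W\!$-orbits of these, so $W$ acts on the $E_{10}$ side in the obvious way. Under the bijection above, swapping $\alpha,\beta$ corresponds to precomposing the embedding with the basis-swap of $L$, and applying $w\in W$ to the pair corresponds to postcomposing with $w$. These two actions commute, so they combine into a $(\Z/2)\times W$ action whose orbits match the $W\!$-orbits of unordered prenilpotent pairs. This gives the first assertion, $N(k)=\#\{\text{embeddings}\}/((\Z/2)\times W)$.

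For the final inequality, I would observe that $W$ is a subgroup of $\Orth(E_{10})$, and that an isometric copy of $L$ in $E_{10}$ is just the image of some isometric embedding, with the copy determined up to the choice of labeling of basis vectors. The main point to get right is the bookkeeping: passing from embeddings up to $(\Z/2)\times W$ to isometric copies up to $\Orth(E_{10})$ amounts to replacing $W$ by the larger group $\Orth(E_{10})$ and then also quotienting by $\Z/2$, each of which can only coarsen the orbit partition and hence only decrease the count. I expect this final step to be the main obstacle, since it requires care to see that neither enlarging the group from $W$ to $\Orth(E_{10})$ nor forgetting the ordering of the basis can increase the number of orbits; both operations collapse orbits together, so the number of $\Orth(E_{10})$-orbits of isometric copies is at most the number of $(\Z/2)\times W$-orbits of embeddings, which is $N(k)$.
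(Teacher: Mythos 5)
Your proposal is correct and matches the paper's intent: the paper gives no written proof, stating only that the lemma ``follows immediately from the previous two,'' and your argument is exactly the immediate one — norm-$2$ vectors are roots (lemma~\ref{lem-what-the-roots-are}), inner product $k\geq-1$ gives prenilpotency (lemma~\ref{lem-what-the-prenilpotent-pairs-are}), ordered pairs are embeddings, and the commuting swap/$W$ actions give the orbit count. The only slight imprecision is in the last step, where distinct embeddings with the same image differ by an element of $\Orth(L)$, which is generally larger than the $\Z/2$ of relabelings; but this only collapses more orbits, so the surjection from $(\Z/2)\times W$-orbits of embeddings onto $\Orth(E_{10})$-orbits of copies, and hence the inequality, still holds.
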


We begin with an overview of a general method called gluing, used for
studying the embeddings of one lattice into another.  When considering
any particular embedding $L\to\Lambda$, we will usually identify $L$
with its image.  In the current situation, one first studies the
possibilities for the saturation $\Lsat:=(L\tensor\Q)\cap \Lambda$.
In the proofs below we will simplify this step away, by restricting to
the case that $L$ is already saturated.  Then, assuming $\det L\neq0$,
one studies the possibilities for $L^\perp$.  In this step we take
advantage of the fact that $\Lambda$ is unimodular: among other
things, it implies that $\Lsat$ and $L^\perp$ have the same
determinant.  This limits $L^\perp$ to finitely many possibilities.
For each candidate $K$ for $L^\perp$, one then considers the possible
ways to glue $K$ to $\Lsat$ in a manner that yields $\Lambda$.  Gluing
means finding a copy of $\Lambda$ between $K\oplus\Lsat$ and
$K^*\oplus(\Lsat)^*$, in which $K$ and $\Lsat$ are saturated.
(Asterisks indicates dual lattices.)  This step boils down to
analyzing the actions of $\Orth(K)$ and $\Orth(L)$ on the discriminant
groups $\D(K)$ and $\D(L)$ of $K$ and $L$, which are finite abelian groups defined below.

Here are the necessary definitions and background.  A lattice $K$
means a free abelian group equipped with a $\Q$-valued symmetric
bilinear pairing.  $K$ is called integral if this pairing is
$\Z$-valued, and $K$ is called even if furthermore all vectors have
even norm. For example, $\Lambda$ is even.  The determinant of the
inner product matrix of $K$, with respect to a basis, is independent
of basis, and is called the determinant $\det K$ of $K$.  We will
encounter only nondegenerate lattices, meaning those of nonzero
determinant.  So we assume nondegeneracy henceforth.  The dual $K^*$
of $K$ means the set of vectors in $K\tensor\Q$ having integer inner
product with all elements of~$K$.  When $K$ is integral, we have
$K\sset K^*$, and in this case we define the discriminant group
$\D(K)$ as $K^*/K$, an abelian group of order $\det K$.  The
$\Z$-valued inner product on $K$ extends to a $\Q$-valued inner
product on $K^*$, which descends to a $\Q/\Z$-valued inner product on
$\D(K)$.  Similarly, if $K$ is even then we can regard the norm of an
element of $\D(K)$ as a well-defined element of~$\Q/2\Z$.  (If $K$ is
not even then the norm is only well-defined mod~$\Z$, but we will only
encounter even lattices.)  By the naturality of the constructions,
$\Orth(K)$ acts on $\D(K)$, preserving these structures.  We write
$\Orth(\D(K))$ for the group of isometries of $\D(K)$, i.e., abelian
group automorphisms that respect the $\Q/\Z$-valued inner product and
$\Q/2\Z$-valued norm.  So there is a natural map
$\Orth(K)\to\Orth(\D(K))$.

The formulation of the theory of integer quadratic forms best suited
for explicit computation is due to Conway and Sloane
\cite[ch.~15]{SPLAG}\cite{Conway-Sloane-mass-formula}.  So we assume
familiarity with their methods and we follow their conventions,
including the unusual one of writing $-1$ for the infinite place of
$\Q$ and defining $\Z_{-1}$ and $\Q_{-1}$ to be $\R$.  For any place
$p$ of $\Q$ we write $K_p$ for the $\pspace$-adic lattice
$K\tensor\Z_p$.  The Sylow $\pspace$-subgroup of $\D(K)$, with its
norm form, is the same as the discriminant group of $K_p$.  Two
lattices $K,K'$ are said to lie in the same genus if $K_p\iso K'_p$
for all places~$p$.  In the positive definite case, the {\it mass} of
a genus means $\sum_K 1/|\Orth(K)|$, where $K$ varies over the
isometry classes in that genus.  This definition makes sense because a
genus contains only finitely many isometry classes.  The mass is
important for us because it is a lower bound for the number of
isometry classes.  We will compute it by using the
Smith--Minkowski--Siegel mass formula, which avoids having to first
enumerate the isometry classes in the genus.

Conway and Sloane gave an elaborate notational system for isometry
classes of $\pspace$-adic lattices, for example the symbols appearing
in lemma~\ref{lem-mass-computations}\ref{item-genus-exists} below.
For $p$ a prime, a symbol $(p^e)^{\pm n}$ indicates an $n$-dimensional
$p$-adic lattice that is got from some unimodular $\pspace$-adic
lattice $U$ by multiplying the inner product by $p^e$.  When $p=2$,
the  symbol also has a subscript, discussed below.  A
chain of symbols $(p^e)^{\pm n}$ represents a direct sum decomposition
of a $p$-adic lattice into Jordan constituents.

For any constituent, the sign~$\pm$, together with the subscript when
$p=2$, describes the isometry class of $U$.  The sign is defined as
the Jacobi symbol $\jacobi{\det U}{p}=\pm$, which we recall is the
Legendre symbol when $p$ is odd.  When $p=2$ it is $+$ or~$-$
according to whether $\det U\cong\pm1$ or $\pm3$ mod~$8$.  In both
cases, the sign is often suppressed when it is~$+$.  When $p=2$ and
$U$ is even, the subscript is the formal symbol~$\II$ and the Jordan
constituent is said to have type~$\II$.  If $p=2$ and $U$ is not even,
then the subscript is an element of $\Z/8$, namely the trace (mod~$8$)
of its
inner product matrix, after diagonalization over $\Z_2$.  (One shows
that a diagonalization does exist, and that the trace is independent of
the choice of diagonalization.)  In this case the Jordan constituent
is said to have type~$\I$.

Now we turn to the specific problem of enumerating the embeddings
$L\to \Lambda$, where we recall that $L$ has inner product matrix
$\bigl(\begin{smallmatrix}2&k\\k&2\end{smallmatrix}\bigr)$.  In fact
  we will bound from below the number of saturated embeddings, meaning
  those whose images are saturated sublattices of $\Lambda$.  We take
  $k\geq3$ to make $L$ indefinite, avoiding the special cases
  $k=-1,0,1,2$.  We factor $d:=-\det L=k^2-4>0$ as
  $2^{e_2}3^{e_3}5^{e_5}\cdots$ and write $f_p$ for the
  non-$\pspace$-part $d/p^{e_p}$ of $d$.  
  
\begin{lemma}
\label{lem-mass-computations}
With $L$, $d$, $e_p$ and $f_p$ as above,
\begin{enumerate}
\item
\label{item-excluded-values-of-e-2}
$e_2$ is $0$, $2$ or${}\geq5$.
\item
\label{item-genus-exists}
There exists a genus of $8$-dimensional positive-definite lattices $K$ of
determinant~$d$, such that
\begin{align*}
K_2
{}&\iso
\begin{cases}
\rlap{\kern110pt if $e_2=0$}
1^{-8}_\II
\\
\rlap{\kern110pt if $e_2=2$}
1^6_\II\, 2^{\kronecker{f_2}{2}2}_{f_2-1}
\\
\rlap{\kern110pt if $e_2\geq5$}
1^6_\II\, 2^1_{-1} \bigl(2^{e_2-1}\bigr)^{\kronecker{f_2}{2}1}_{f_2}
\kern80pt% for centering
\end{cases}
\\
K_{p}
{}&\iso
\begin{cases}
\rlap{\kern110pt for $p>2$ when $e_p=0$}
1^{\kronecker{f_p}{p}8}
\\
\rlap{\kern110pt for $p>2$ when $e_p>0$}
1^{\kronecker{2}{p}7}\,\bigl(p^{e_p}\bigr)^{\kronecker{2f_p}{p}1}
\end{cases}
\end{align*}
\item
\label{item-every-member-of-genus-occurs}
Suppose $K$ lies in this genus.  Then there are at least
$$
\frac{2^{\hbox{\scriptsize\rm\,number of odd primes       dividing $d$}}}{4\,|\Orth(K)|}
$$
$\Orth(\Lambda)$-orbits on the set of saturated sublattices of $\Lambda$ that are isometric
to $L$ and have orthogonal complement isometric to~$K$.
\end{enumerate}
\end{lemma}

\begin{proof}
\ref{item-excluded-values-of-e-2} If $k$ is odd then so is $d=k^2-4$, so $e_2=0$.  If $k$ is divisible by~$4$ then $d$ is divisible
by~$4$ but not~$8$, so $e_2=2$.  If $k$ is twice an odd number then
$d=4({\rm odd}^2-1)$ and the second factor is divisible by~$8$.

As preparation for \ref{item-genus-exists} and
\ref{item-every-member-of-genus-occurs}, we give the $\pspace$-adic invariants of $L$.
Its determinant and signature are $-d$ and $0$, and
\begin{align*}
L_2
{}&\iso
\begin{cases}
\rlap{\kern110pt if $e_2=0$}
1^{-2}_\II
\\
\rlap{\kern110pt if $e_2=2$}
2^{\kronecker{-f_2}{2}2}_{1-f_2}
\\
\rlap{\kern110pt if $e_2\geq5$}
2^1_1 \bigl(2^{e_2-1}\bigr)^{\kronecker{-f_2}{2}1}_{-f_2}
\kern80pt% for centering
\end{cases}
\\
L_{p}
{}&\iso
\begin{cases}
\rlap{\kern110pt if $p>2$ and $e_p=0$}
1^{\kronecker{-f_p}{p}2}
\\
\rlap{\kern110pt if $p>2$ and $e_p>0$}
1^{\kronecker{2}{p}1}\,\bigl(p^{e_p}\bigr)^{\kronecker{-2f_p}{p}1}
\end{cases}
\end{align*}
These can be worked out explicitly using the methods of
\S4.4 and \S7 of \cite[ch.~15]{SPLAG}.
(It helps to observe that if $k$ is even or $p$ is odd then
$L_p\iso\langle2\rangle\oplus\langle-d/2\rangle$.)
Defining $\Lneg$ as $L$ with all inner products negated, its
local forms $\Lneg_{p\neq-1}$ are as follows.  If $e_p=0$ then
$\Lneg_p$ is isometric to $L_p$.
If $e_p>0$ then the Conway--Sloane symbol of $\Lneg_p$ is got from that of $L_p$ by
multiplying each superscript by
$\kronecker{-1}{p}$ (if $p>2$), or negating subscripts (if $p=2$).
When $p=2$ and a constituent has type~$\II$, the negation of
its subscript~$\II$ is taken to mean $\II$ again.

By Theorem~11 in \S7.7 of \cite[ch.~15]{SPLAG}, there
exists a $\Z$-lattice $K$ of determinant~$d$, having specified local
forms $K_{p=-1,2,3,\dots}$, if and only if both the following hold.
First, $\det K_p\in d\cdot(\Q_p^\times)^2$ for all places $p$.
Second, the oddity formula holds:
$$
\signature(K_{-1})+\sum_{p\geq3}\pexcess(K_p)
\cong
\oddity(K_2)
\quad
\pmod8.
$$ Here the oddity (resp.\ $\pexcess$) is a $\Z/8$-valued invariant of
quadratic forms over $\Q_2$ (resp.\ $\Q_{p\,\rm odd}$), defined
in \S5.1 of \cite[Ch.~15]{SPLAG}.  It is the
sum of the oddities (resp.\ $\pexcesses$) of the Jordan constituents,
which can be read off from the Conway-Sloane notation as follows.
The oddity of a $2$-adic Jordan constituent of type~$\II$ is
always~$0$.  For a type~$\I$ constituent $(2^e)^{\pm n}_t$, the oddity
is the subscript $t$, plus~$4$ if
the sign is~$-$ and $e$ is odd.
For odd $p$, the $\pexcess$ of $(p^e)^{\pm n}$ is
$n(p^e-1)$, plus~$4$ if the sign is~$-$ and $e$ is odd.

Both the determinant condition and
the oddity formula for the family of $K_p$'s in \ref{item-genus-exists}
can be verified as follows.  First,
  $K_{-1}$ and $L_{-1}$ have signature~$8$ and determinants of
opposite signs.
Second, although we didn't say
so, we constructed
$K_2$ as $1^6_\II\oplus\Lneg_2$ and
$K_p$ as $1^{\kronecker{-1}{p}6}\oplus\Lneg_p$ for $p>2$.  
Now, the $\Z_2$-lattice $1^6_\II$ is isometric to the sum of three copies of
$\bigl(\begin{smallmatrix}0&1\\1&0\end{smallmatrix}\bigr)$, so it has
  determinant~$-1$.  And for odd~$p$, the $\Z_p$-lattice
  $1^{\kronecker{-1}{p}6}$ is isometric to $\langle1,1,1,1,1,-1\rangle$, so it also
  has determinant~$-1$.
It follows that $\det K_p=-\det\Lneg_p=d$ for all $p$, verifying the
determinant condition.  For the oddity formula, note that
the
$2$-adic lattice $1^6_\II$ has oddity~$0$, and for $p>2$ the
$\pspace$-adic lattice $1^{\kronecker{-1}{p}6}$ has $\pexcess$ equal
to~$0$.  Since $\Lneg$ exists, its local forms $\Lneg_p$ satisfy the
oddity formula.  Since the corresponding formula for the $K_p$ has
exactly the
same terms, it also holds.  So there exists a lattice~$K$ having those
local forms.  

\ref{item-every-member-of-genus-occurs} In the language of \S3 of
\cite[ch.~4]{SPLAG}, this is the question of how one may glue $L$
to~$K$ to obtain~$\Lambda$.  Here are the details.  Suppose that in
addition to $K$, we are given a totally isotropic subgroup $G$ of
$\D(L\oplus K)=\D(L)\oplus\D(K)$ that projects isomorphically onto
$\D(L)$ and onto $\D(K)$.  Totally
isotropic means that the natural $\Q/\Z$-valued inner
product and $\Q/2\Z$-valued norm on $\D(L\oplus K)$ vanish identically
on $G$.
From $K$ and $G$ we will construct a
saturated embedding $L\to\Lambda$ with $L^\perp\iso K$.  

Before constructing the embedding, we explain why such a subgroup $G$
exists. Recall from the proof of \ref{item-genus-exists} that each $K_p$ was
constructed as the sum of $\Lneg_p$ and a unimodular $\Z_p$-lattice.
It follows that $\D(K_p)$ and $\D(\Lneg_p)$ are isomorphic as finite
abelian groups equipped with  $\Q_p/\Z_p$-valued inner products
and $\Q_p/2\Z_p$-valued norms.  These discriminant groups are the
Sylow $p$-subgroups of $\D(K)$ and $\D(\Lneg)$, so it follows that $\D(K)$
and $\D(\Lneg)$ are isomorphic in the corresponding sense: there
exists a group isomorphism $\D(L)\to\D(K)$ that negates norms and
inner products.  We may take $G$ to be its graph.

Here is the construction of the embedding $L\to\Lambda$.  Write
$L\oplus_G K$ for the preimage of $G\sset\D(L)\oplus\D(K)$ in
$L^*\oplus K^*$.  This construction is called ``gluing $L$ to $K$ by
$G$''.  The resulting lattice is integral and even (since $G$ is
totally isotropic) and unimodular (since its index~$d$ sublattice
$L\oplus K$ has determinant~$-d^2$).  By Theorem~5 of \cite[\S
  V.2]{Serre}, up to isometry $\Lambda$  is the only even unimodular lattice of signature
$(9,1)$.  So $L\oplus_G K\iso\Lambda$
and we have constructed a copy of $\Lambda$ containing $L\oplus K$.
Since $G\sset\D(L)\oplus\D(K)$ meets $\D(L)$ and $\D(K)$ trivially, both $L$
and $K$ are saturated in this copy of $\Lambda$.  This completes the
construction of a saturated embedding $L\to\Lambda$ whose orthogonal
complement is isometric to~$K$.

In fact we get such an embedding for every totally isotropic subgroup
$G$ of $\D(L)\oplus\D(K)$ that projects isomorphically to each
summand.  We can usually obtain many such subgroups from the one
constructed above, by taking its images under transformations $f\oplus
\id_{\D(K)}$ where $f$ is an isometry of $\D(L)$.  Distinct $f$'s give
distinct subgroups, because $G\to\D(K)$ is an isomorphism.  The number
of self-isometries of $\D(L)$ is at least~$2^o$, where $o$ is the
number of odd primes $p$ dividing~$d$.  To see this, note that for
each such $p$, the self-map of $\D(L)$ which negates the Sylow
$p$-subgroup, and acts by the identity on all other Sylow subgroups,
is an isometry.  So $L^*\oplus K^*$ contains at least $2^o$ copies of
$\Lambda$ in which $L$ and $K$ are saturated.

It is possible for two of these subgroups $G$, $G'$ to yield equivalent embeddings
$L\to\Lambda$.  ``Equivalent'' has the obvious meaning: that there is
an isometry from $L\oplus_G K\iso\Lambda$ to $L\oplus_{G'}K\iso\Lambda$ that sends $L$ to $L$.  It is easy to see that this happens if and only if there
are isometries of $L$ and~$K$, such that the induced isometry
of $\D(L)\oplus\D(K)$ sends $G$ to $G'$.  To prove \ref{item-every-member-of-genus-occurs}
it therefore suffices to show that the $2^o$ many subgroups of
$\D(L)\oplus\D(K)$ constructed in the previous paragraph represent at
least $2^o/4|\Orth(K)|$ many orbits under the action of
$\Orth(L)\times\Orth(K)$ on $\D(L)\oplus\D(K)$.  And to prove this it
suffices to prove that the action of $\Orth(L)$ on
$\D(L)$ factors through a group of order${}\leq4$.

For this we write down generators for $\Orth(L)$. Consider its
action on the set of norm~$-1$ vectors in $L\tensor\R$.  There are two
components, exchanged by negation, each a copy of $1$-dimensional
hyperbolic space (a copy of the real line).  The subgroup of $\Orth(L)$
generated by the reflections in norm~$2$ roots is normal, and acts on
each component as an infinite dihedral group~$D_\infty$.  Since
$\Orth(L)$ normalizes $D_\infty$, it is generated by $D_\infty$, the
$\Orth(L)$-stabilizer of a Weyl chamber, and negation.  Since the Weyl
chamber is an interval, its $\Orth(L)$-stabilizer has order${}\leq2$,
so $\Orth(L)$ contains $D_\infty$ of index${}\leq4$.  To finish the
proof we check that $D_\infty$ acts trivially on $\D(L)$.
Suppose $x\in L^*$ and that $r\in L$ has norm~$2$.  Then $r$'s
reflection sends $x$ to $x-(x\cdot r)r$, which differs from $x$ by an
element of $L$, hence represents the same element of $\D(L)$ as~$x$.
\end{proof}

\begin{lemma}
\label{lem-mass}
The genus in lemma~\ref{lem-mass-computations} has mass equal to
$$
\frac{d^{\,7/2}\,\zeta_d(4)}{30240\pi^4\cdot2^{\hbox{\scriptsize\rm\,number of odd primes
      dividing $d$}}}
\cdot
\begin{cases}
\frac{1}{272}&\hbox{if $e_2=0$}
\\
\frac{1}{512}&\hbox{if $e_2=2$}
\\
\frac{1}{1024}&\hbox{if $e_2\geq5$}
\end{cases}
$$
where $\zeta_d$ is defined as in
\cite[\S7]{Conway-Sloane-mass-formula}, by
$$
\zeta_d(s)
=
\prod_{{\rm primes}\,p\,\nmid\,2d}\Bigl\{1-\Bigjacobi{d}{p}\frac{1}{p^s}\Bigr\}^{-1}
=
\sum_{\begin{smallmatrix}m\geq1\\(m,2d)=1\end{smallmatrix}}\Bigjacobi{d}{m}m^{-s}.
$$ 
\end{lemma}

\begin{proof}
  This is a lengthy exercise using the intricate but explicit
  procedure in \cite{Conway-Sloane-mass-formula}.  We will use the
  specialized vocabulary developed there, giving enough
  details that the reader possessing a copy of \cite{Conway-Sloane-mass-formula}, but unfamiliar
  with it, can follow along.  
  Following \cite[\S7]{Conway-Sloane-mass-formula}, the mass is
\begin{equation}
\label{eq-mass-as-standard-mass-times-factors}
m(K)=\std(K)\prod_{{\rm primes\,}p|2d}\frac{m_p(K)}{\std_p(K)}
\end{equation}
	where $m_p(K)$ is the ``$\pspace$-mass'', and $\std(K)$ resp.\ $\std_p(K)$ is the
``standard mass'' resp.\ ``standard $p$-mass'' of an $8$-dimensional lattice of determinant~$d$.  Table~3 of \cite{Conway-Sloane-mass-formula} gives
$\std(K)$ as $\zeta_d(4)/30240\pi^4$, and \S7 of \cite{Conway-Sloane-mass-formula} gives
$$
\std_p(K):=\frac{1}{2(1-p^{-2})(1-p^{-4})(1-p^{-6})}.
$$ For computing $m_p(K)$ we refer to \S4--\S5 of
\cite{Conway-Sloane-mass-formula}.  It is defined as
$$
m_p(K)=(\hbox{diagonal product})\cdot(\hbox{cross
  product})\cdot(\hbox{type factor}),
$$ the last term appearing only if $p=2$.  The diagonal product is the
product of the ``diagonal factors'' $M_p(f)$, where $f$ varies over
the $p$-adic Jordan constituents.  $M_p(f)$ is defined in
\cite[table~2]{Conway-Sloane-mass-formula} in terms of $p$ and the ``species'' of~$f$, which is one of
the formal symbols
\begin{center}
$0+$\quad or\quad $1$, $3$, $5$, $7,\dots$\quad or\quad $2+$, $2-$, $4+$, $4-$, $6+$, $6-,\dots$
\end{center}
The species can be read from the
Conway-Sloane symbol for~$f$, except when $p=2$ when it is also a
function of whether $f$ is ``bound'' or ``free'', which depends on
$f$'s neighboring Jordan constituents.
See table~1 in \cite{Conway-Sloane-mass-formula}.
(When $p=2$, one sometimes also refers to the ``octane value'' of~$f$ when
determining the species.  This is an element of $\Z/8$ which can be
read from the Conway-Sloane symbol for~$f$.  We will explain how, when
we need to.)
Even the $0$-dimensional Jordan constituents contribute to the
diagonal product, but their diagonal factors are~$1$ except when they
are bound and $p=2$.  These exceptional constituents are called
``bound love forms'', which have diagonal factor~$\frac12$.

The cross product is the product of the ``cross terms'', one for each
pair of distinct $p$-adic Jordan constituents.  Given where $e<e'$,
the cross-term for $(p^e)^{\pm n}$ and $(p^{e'})^{\pm n'}$ is $p^{n
  n'(e'-e)/2}$.  The type factor appears only when $p=2$, when it is
$2^{n(\I,\I)-n(\II)}$.  Here $n(\I,\I)$ means the number of pairs of
adjacent type~$\I$ constituents, and $n(\II)$ means the sum of the
ranks of the type~$\II$ constituents.

Now we begin the computations proper.  We
treat the case of odd $p|d$ first.  The 
Jordan constituents
$1^{\kronecker{2}{p}7}$ and
$\bigl(p^{e_p}\bigr)^{\kronecker{2f_p}{p}1}$ have species $7$ and $1$
respectively.  So their diagonal factors are
$M_p(7)=\std_p(K)$ and $M_p(1)=\frac12$.
Since there are only two Jordan constituents, there is a single 
cross-term, namely
$p^{7\cdot1\cdot(e_p-0)/2}=p^{7e_p/2}$.  By
definition, $m_p(K)$ is the product of the diagonal factors and this
cross-term.  This gives $m_p(K)/\std_p(K)=\frac{1}{2}p^{7e_p/2}$.

The calculation of $m_2(K)$ is similar but more intricate, and we must
treat all three possibilities for $K_2$ listed in lemma~\ref{lem-mass-computations}\ref{item-genus-exists}.  First suppose $e_2=0$, so
$K_2\iso 1^{-8}_\II$.  The single Jordan constituent is free with
type~$\II$, dimension~$8$ and sign~$-$.
The octane value of a type~$\II$ constituent is $0$ or~$4$ according
to whether the sign is $+$ or~$-$.  Therefore the octane value of
$1_\II^{-8}$ is~$4$, so table~1 of \cite{Conway-Sloane-mass-formula} gives its species as~$8-$,
and then formula (5) of \cite{Conway-Sloane-mass-formula} gives its diagonal factor as
$$
M_2(8-)
=
\frac{1}{2(1-2^{-2})(1-2^{-4})(1-2^{-6})(1+2^{-4})}
=\frac{16}{17}\std_2(K).
$$ There are no type~$\I$ constituents (hence no bound love forms) and
no cross-terms.  Since type~$\II$ constituents account for~$8$
dimensions, the type factor is~$2^{-8}$.  So
$m_2(K)/\std_2(K)=\frac{16}{17}2^{-8}=\frac{1}{272}2^{7e_2/2}$.

Now suppose $e_2=2$, so $K_2\iso1^6_\II\,
2^{\kronecker{f_2}{2}2}_{f_2-1}$.  The first constituent is bound and
$6$-dimensional of type~$\II$. So it has species~$7$, hence
diagonal factor $M_2(7)=\std_2(K)$.  Before analyzing the second
constituent we remark that $f_2\cong3$ mod~$4$.  To see this, recall 
from the proof of lemma~\ref{lem-mass-computations}\ref{item-excluded-values-of-e-2} that $e_2=2$ exactly when $k=2l$ with $l$ even.  From $d=4(l^2-1)$
we get $f_2=(l+1)(l-1)$, and observe that one factor on the right
is $1$ mod~$4$ while the other is $3$ mod~$4$.  Now, the octane value
of a type~$\I$ constituent is its subscript, plus~$4$ if the
constituent's sign is~$-$.
We have just shown that $f_2\cong3$ or $7$ mod~$8$.  
In either
case,   $2^{\kronecker{f_2}{2}2}_{f_2-1}$ has octane value~$6$, hence species~$1$, hence diagonal
factor $M_2(1)=\frac12$.  There is one bound love form, namely
$4^{+0}_\II$, and its diagonal factor is~$\frac12$.  There is one cross-term, contributing a
factor $2^{6\cdot2/2}$.  There are no pairs of adjacent type~$\I$
constituents, and $6$ dimensions total of type~$\II$ constituents,
so the type factor is~$2^{-6}$.  Therefore
$$
m_2(K)/\std_2(K)=\textstyle
\frac{1}{2}\cdot\frac{1}{2}\cdot2^6\cdot2^{-6}=\frac{1}{512}2^{7e_2/2}.
$$

Finally, suppose $e_2\geq5$, so $K_2\iso
1^6_\II\, 2^1_{-1}
\bigl(2^{e_2-1}\bigr)^{\kronecker{f_2}{2}1}_{f_2}$.  The constituent
$1^6_\II$ is bound, hence has species~$7$, hence diagonal factor
$M_2(7)=\std_2(K)$.  The constituent $2^1_{-1}$ is free with octane
value~$-1$, hence species $0+$, hence diagonal factor $M_2(0+)=1$.
The last constituent $\bigl(2^{e_2-1}\bigr)^{\kronecker{f_2}{2}1}_{f_2}$ is
free.  Considering the four possibilities for $f_2$ mod~$8$
shows that the octane value is always $\pm1$, so this constituent also
has species~$0+$ and diagonal factor~$1$.  There are three bound love
forms, namely $4^{+0}_\II$, $(2^{e_2-2})^{+0}_\II$ and
$(2^{e_2})^{+0}_\II$, with diagonal factors~$\frac12$.
So the diagonal product is $\std_2(K)\cdot1\cdot1\cdot\frac12\cdot\frac12\cdot\frac12$.
There is a  cross-term for
each pair of constituents, and the cross-product is their product,
namely
$$
\textstyle
2^{6\cdot1/2}
\cdot
\bigl(2^{e_2-1}\bigr)^{6\cdot1/2}
\cdot
\bigl(2^{e_2-2}\bigr)^{1\cdot1/2}
=\frac{1}{2}2^{7e_2/2}
$$
Finally, there are no pairs of adjacent type~$\I$ constituents, and $6$
dimensions total of type~$\II$ constituents, so the type
factor is~$2^{-6}$.  Multiplying the diagonal product, cross product
and type factor together yields
$$
\textstyle
m_2(K)/\std_2(K)=2^{-3}\cdot\frac{1}{2}2^{7e_2/2}\cdot2^{-6}
=\frac{1}{1024}2^{7e_2/2}
$$

We have now computed all the ingredients in 
\eqref{eq-mass-as-standard-mass-times-factors}, and assembling them
yields the lemma.
\end{proof}

\begin{proof}[Proof of theorem~\ref{thm-number-of-pairs-at-least-polynomial-of-degree-7}]
  By lemma~\ref{lem-conversion-to-quadratic-forms-problem}, $N(k)$ is at least as large as the number of
  $\Orth(\Lambda)$-orbits on saturated sublattices of $\Lambda$ that
  are isometric to $L$.  By lemma~\ref{lem-mass-computations}\ref{item-every-member-of-genus-occurs}, the latter quantity
  is at least
  $$
\sum_K  \frac{2^{\hbox{\scriptsize\rm\,number of odd primes dividing $d$}}}{4\,|\Orth(K)|}
  $$
  where $K$ varies over the genus studied there.  The
  number of terms in the sum is the size of the genus. This is at
  least twice the mass given in lemma~\ref{lem-mass}, because each lattice
  has at least two isometries.  So the number of terms is at least
  $$
  \frac{2\,d^{7/2}\,\zeta_d(4)}{1024\cdot30240\pi^4\cdot
    2^{\hbox{\scriptsize\rm\,number of odd primes dividing $d$}}}
  $$
  Also, replacing $\Orth(K)$ in every term by
  $W(E_8)$ does not increase the sum,
  because the largest possible order for a finite subgroup of
  $\GL_8(\Z)$ is $|W(E_8)|$ (see
  \cite{Conway-Sloane-subgroups-of-GL-n-Z} and its references).
  Therefore
  $$
N(k)\geq  \frac{2 d^{7/2}\,\zeta_d(4)}{1024\cdot30240\pi^4\cdot4|W(E_8)|}
  $$
  Next we note
  $$
  \zeta_d(4)\geq1-\frac{1}{2^4}-\frac{1}{3^4}-\cdots
  =2-\sum_{n=1}^\infty n^{-4}=2-\pi^4/90
  $$
We have shown that
$$
N(k)\geq
\frac{2(k^2-4)^{7/2}\bigl(2-\pi^4/90)}{1024\cdot30240\pi^4\cdot4|W(E_8)|}
> 2.1\times10^{-19}(k^2-4)^{7/2}
$$ whenever $k\geq3$.  This almost proves our claim that the function $N(k)$ is
bounded below by $Ck^7$ for some constant $C>0$.  What remains is to
check that $N(1)$ and $N(2)$ are positive.  The $E_{10}$ root system
contains an $A_2$ (resp.\ $E_9$) root system, so it contains a pair of roots with
inner product~$1$ (resp.\ $2$).  Therefore $N(1)$ and~$N(2)$ are
positive, as desired.
\end{proof}

\section{Other hyperbolic root lattices}
\label{sec-other-lattices}

\noindent
The details of the previous section were $E_{10}$-specific, but the
same philosophy looks likely to apply to the other symmetrizable hyperbolic root
systems.  This suggests the same enumeration-is-impracticable conclusion in
rank${}>3$.  We have not worked out the details, because for us the
$E_{10}$ result is enough to motivate the improvements to Tits'
presentation that we mentioned in the introduction.  But it seems
valuable to give an outline of how the calculations would go.

By a hyperbolic root system we mean one arising from an irreducible
Dynkin diagram that is neither affine nor spherical, but whose irreducible proper subdiagrams are.  There are 238 such Dynkin
diagrams, of which 142 are symmetrizable; see \cite{Carbone-etal}.
Symmetrizability is equivalent to the root lattice $\Lambda$ possessing an
inner product that is invariant under the Weyl group~$W$.  This is obviously a
prerequisite to applying lattice-theoretic methods.  Hyperbolicity
implies that $\Lambda$ has Lorentzian signature and that $W$ has finite
index in $\Orth(\Lambda)$.

The roots are the $W\!$-images of the simple roots, so there are only
finitely many root norms.  For each pair of such norms $N,N'$, we can study
prenilpotent pairs of roots $r,r'$ with norms $N,N'$.  The analogue of
lemma~\ref{lem-what-the-prenilpotent-pairs-are} is that $r,r'$ form a prenilpotent pair if and only if
$k:=r\cdot r'$ is larger than $-\sqrt{N N'}$.  By taking $k>\sqrt{N N'}$ we may suppose the span
$L$ of $r,r'$ is indefinite.  We are interested in the number $N(k)$
of $W\!$-orbits of such prenilpotent pairs.

Next one studies the embeddings of $L$ into $\Lambda$ as in
lemma~\ref{lem-mass-computations}, which of course depend on $d:=-\det L\approx k^2$.  One
can follow the $E_{10}$ argument to bound below the number of
$\Orth(L)$-orbits of saturated copies of $L$ in~$\Lambda$.  First one would
have to work out which genera could occur as $L^\perp$.  If there are
any, then we fix one and and restrict attention to saturated copies of
$L$ for which $L^\perp$ lies in that genus.  Then one would work out
the mass of that genus.  The essential part of the mass calculations
in lemma~\ref{lem-mass} are the cross-terms, because they provide the
$d^{7/2}$ term that yields theorem~\ref{thm-number-of-pairs-at-least-polynomial-of-degree-7}.  The corresponding term for $\Lambda$
would be $d^{(\dim\Lambda-3)/2}$.  This suggests that the number of
$\Orth(L)$-orbits of prenilpotent pairs (with $N$, $N'$ fixed as
above) grows at least as fast as a multiple of $k^{\dim\Lambda-3}$.

An obstruction to turning this into a proof is that there may be some
embeddings of $L$ into $\Lambda$ that send the basis vectors to
non-roots.  We expect that the finiteness of
$[\Orth(\Lambda):W]$ means that this difficulty can  be more or less ignored.
The point is that each $\Orth(\Lambda)$-orbit of embeddings $L\to\Lambda$
splits into at most $[\Orth(\Lambda):W]$ many $W\!$-orbits.
So we expect that there is a positive constant~$C$, such that
for each $k$,  $N(k)$ is either~$0$ or at least
$C k^{\dim\Lambda-3}$.

This suggests that if $\dim\Lambda>3$ then tabulating the prenilpotent
pairs is not feasible.  But the $\dim\Lambda=3$ case is borderline and
may be amenable to direct attack.  Indeed, Carbone and Murray
\cite{Carbone-Murray} have studied one particular case with
$\dim\Lambda=3$.  From our perspective, what is special about the
$\dim\Lambda=3$ case is that $L^\perp$ is $1$-dimensional, and every
$1$-dimensional genus has a unique member and mass $1/2$.  So the main
contribution to the analogue of
theorem~\ref{thm-number-of-pairs-at-least-polynomial-of-degree-7}'s
$N(k)$ will be some analogue of the term
\begin{equation}
\label{eq-two-to-power}
  2^{\hbox{\scriptsize\rm\,number of odd primes dividing $d$}}
\end{equation}
from lemma~\ref{lem-mass-computations}\ref{item-every-member-of-genus-occurs}.
As a function of $k$ (recall $d=k^2-4$), this behaves irregularly.
For example, if there are infinitely many primes at distance~$4$ from
each other, then \eqref{eq-two-to-power} takes the value~$4$ infinitely often, even
though it also takes arbitrarily large values.

\end{document}